\documentclass{article}

\usepackage{authblk}
\usepackage{amsmath}
\usepackage{amssymb}
\usepackage{amsthm}
\usepackage{booktabs}
\usepackage[margin=1in]{geometry}
\usepackage[hidelinks]{hyperref}

\newcommand{\Rplayer}{\mathfrak{R}}
\newcommand{\Dplayer}{\mathfrak{D}}

\newtheorem{lemma}{Lemma}
\newtheorem{theorem}{Theorem}
\newtheorem{corollary}{Corollary}

\title{A Combinatorial Generalization of a Random-Player Game}
\author{Yehonatan Fridman}
\affil{Ben-Gurion University, NRCN, Israel \thanks{fridyeh@post.bgu.ac.il}}
\date{}

\begin{document}

\maketitle

\begin{abstract}
In a previous note, a two-player game between a random player and a
deterministic player was introduced, and it was shown analytically that the
winning probability of the deterministic player is the derangement probability
$d_n/n!$. The natural question left open was to explain this coincidence
combinatorially. This paper gives such an explanation and extends it to a
larger family of games. In the generalized game, the deterministic player
removes $q$ elements per turn, or all remaining elements if fewer than $q$
remain. We couple the game exactly to the cycle decomposition of a uniformly
random permutation. Under this coupling, the random player wins precisely when
the first cycle of length at most $q$, read in canonical cycle order, is a
fixed point. The case $q=1$ recovers derangements, while the general case is
governed by the first short cycle of the permutation.
\end{abstract}

\subsection*{Keywords}
Random games; random permutations; cycle decomposition;\\
derangements; short cycles

\section{Introduction}

The random-player game studied in~\cite{fridman2024random} starts with a pile
of $n$ elements. On each turn, the random player $\Rplayer$ removes a uniformly
chosen number of elements from the current pile, while the deterministic player
$\Dplayer$ removes one element. It was shown there, by an analytic recurrence
calculation, that the probability that $\Dplayer$ wins is
\[
    \frac{d_n}{n!},
\]
where $d_n$ is the number of derangements of an $n$-element set. Equivalently,
the analytic solution gave
\[
    \mathbb{P}(\Dplayer\text{ wins from }n\text{ elements})
    =
    \sum_{k=0}^{n}\frac{(-1)^k}{k!},
    \qquad
    \lim_{n\to\infty}
    \mathbb{P}(\Dplayer\text{ wins from }n\text{ elements})
    =e^{-1}.
\]
Thus the game has the same winning probability as the event that a uniformly
random permutation has no fixed points.

The analytic calculation proves the identity, but it does not explain why a
game on a pile should know about derangements. The main purpose of this note is
to give a direct combinatorial explanation. We also show that the same idea
naturally extends to a more general game in which $\Dplayer$ removes $q$
elements per turn. In this generalized game, derangements are replaced by the
first cycle of length at most $q$ in a uniformly random permutation.

The needed facts about random permutation cycles are classical: cycle
decompositions, derangements, short cycles, and Poisson limits for fixed cycle
lengths are standard; see, for example, Feller~\cite{feller} and Ford's toolkit
on cycle types of random permutations~\cite{ford2022cycle}. The contribution
here is the exact coupling between the game and the canonical cycle order of a
uniformly random permutation.

\section{The generalized game}

Fix a positive integer $q$. The game starts with a pile of $n$ elements. The
players move alternately, and $\Rplayer$ moves first. If there are $m$ elements
at the start of $\Rplayer$'s turn, then $\Rplayer$ removes a uniformly chosen
number $k\in\{1,\ldots,m\}$. If this empties the pile, $\Rplayer$ wins. If not,
then $\Dplayer$ removes $\min(q,m')$ elements, where $m'$ is the number of
elements remaining after $\Rplayer$'s move. If this empties the pile,
$\Dplayer$ wins. Otherwise the game continues.

Let $R_n^{(q)}$ and $D_n^{(q)}$ be the probabilities that $\Rplayer$ and
$\Dplayer$ win, respectively, when the initial pile has $n$ elements. Thus
$R_n^{(q)}+D_n^{(q)}=1$.

The case $q=1$ is the original game: $\Dplayer$ removes one element on each
turn. The purpose of this note is to explain this case, and all larger values
of $q$, directly through the cycle structure of random permutations.

\section{Revealing cycles}

Let $\pi$ be a uniformly random permutation of a finite set $S$. We write its
cycles in \emph{canonical order}: first the cycle containing the smallest
element of $S$, then the cycle containing the smallest element not yet written,
and so on.

\begin{lemma}
\label{lem:uniform-cycle-length}
Suppose $|S|=m$. In a uniformly random permutation of $S$, the length $L$ of
the cycle containing the smallest element of $S$ is uniformly distributed on
$\{1,\ldots,m\}$. After this cycle is revealed and removed, the permutation
induced on the remaining elements is again uniform.
\end{lemma}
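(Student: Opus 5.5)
We will prove the lemma by a direct count. Fix $\ell\in\{1,\ldots,m\}$ and count the permutations of $S$ in which the cycle through the smallest element $s_0$ has length exactly $\ell$. Such a permutation is determined by two independent pieces of data. The first is the cycle through $s_0$, written as $(s_0\,a_1\,a_2\,\cdots\,a_{\ell-1})$ with the $a_i$ distinct elements of $S\setminus\{s_0\}$. The second is an arbitrary permutation of the remaining $m-\ell$ elements. The ordered tuple $(a_1,\ldots,a_{\ell-1})$ can be chosen in $(m-1)(m-2)\cdots(m-\ell+1)=\frac{(m-1)!}{(m-\ell)!}$ ways, and the permutation of the complement in $(m-\ell)!$ ways. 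The count is therefore
\[
\frac{(m-1)!}{(m-\ell)!}\,(m-\ell)! = (m-1)!,
\]
which does not depend on $\ell$. Dividing by $m!$ gives $\mathbb{P}(L=\ell)=1/m$ for each $\ell$, so $L$ is uniform on $\{1,\ldots,m\}$.

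For the second assertion, condition on the full revealed cycle, that is, on the event that the cycle through $s_0$ is exactly $C=(s_0\,a_1\,\cdots\,a_{\ell-1})$. The same bijection shows that permutations of $S$ containing $C$ as a cycle correspond one-to-one, by restriction, to permutations of $T=S\setminus\{s_0,a_1,\ldots,a_{\ell-1}\}$. Under the uniform measure each of these $(m-\ell)!$ permutations has equal probability, so conditionally on $C$ the induced permutation on $T$ is uniform on $\mathrm{Sym}(T)$. Because this conditional law is uniform for every value of $C$, the induced permutation is also uniform given only $L$. This is the form needed to iterate along the canonical order: the next canonical cycle is the one through $\min T$, and the lemma applies again with $m-\ell$ in place of $m$.

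The main point requiring care is the bijection itself. We must check that restriction really is a bijection between permutations of $S$ having $C$ as a cycle and permutations of $T$, and that its inverse, adjoining $C$ to any permutation of $T$, always gives a permutation of $S$ whose cycle through $s_0$ is exactly $C$. Both hold because cycles are disjoint, and the count above then follows immediately. We state the conditioning on the full cycle $C$, not merely on $L$, because the later game coupling reveals the actual elements of each cycle.
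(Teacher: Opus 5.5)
Your proof is correct and follows the paper's argument. You count the permutations with $L=\ell$ as $\frac{(m-1)!}{(m-\ell)!}\,(m-\ell)!=(m-1)!$, which is the paper's $\binom{m-1}{\ell-1}(\ell-1)!(m-\ell)!$ organized differently, and you get conditional uniformity from the fact that the complement of the fixed cycle can be permuted arbitrarily; your explicit restriction bijection and conditioning on the full cycle $C$ just spell out what the paper's one-line second step leaves implicit.
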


\begin{proof}
Let $a$ be the smallest element of $S$. For a fixed length $\ell$, the number
of permutations in which the cycle containing $a$ has length $\ell$ is
\[
    \binom{m-1}{\ell-1}(\ell-1)!(m-\ell)!=(m-1)!.
\]
This number is independent of $\ell$, so $L$ is uniform on
$\{1,\ldots,m\}$.

For any fixed revealed cycle, the remaining elements may be permuted
arbitrarily. Hence the induced permutation on the remaining set is uniform.
\end{proof}

\section{The coupling}

We now couple the $q$-game with a uniformly random permutation of
$\{1,\ldots,n\}$. Reveal the cycles of the permutation in canonical order.
Assume that $m$ elements have not yet been revealed, and let $L$ be the length
of the next cycle.

Define $\Rplayer$'s move $K$ from $L$ as follows:
\[
K=
\begin{cases}
m, & L=1,\\
m-L+1, & 2\le L\le \min(q,m),\\
L-q, & q<L\le m.
\end{cases}
\]
This map sends the possible values of $L$ bijectively to
$\{1,\ldots,m\}$. If $m\le q$, then $L=1$ gives $K=m$, while
$L=2,\ldots,m$ gives $K=m-1,\ldots,1$. If $m>q$, then
$L=q+1,\ldots,m$ gives $K=1,\ldots,m-q$, the values
$L=2,\ldots,q$ give $K=m-1,\ldots,m-q+1$, and $L=1$ gives
$K=m$. By Lemma~\ref{lem:uniform-cycle-length}, $L$ is uniform on
$\{1,\ldots,m\}$, and therefore $K$ is uniform on $\{1,\ldots,m\}$. Thus the
coupled process gives exactly the same random move that $\Rplayer$ makes in
the game.

The outcome is now determined by the length of the next cycle.

\begin{itemize}
\item If $L=1$, then $K=m$, so $\Rplayer$ removes the whole pile and wins.
\item If $2\le L\le q$, then $\Rplayer$ leaves $L-1$ elements. Since
      $L-1\le q$, $\Dplayer$ removes them all and wins.
\item If $L>q$, then $\Rplayer$ removes $L-q$ elements and $\Dplayer$ removes
      the next $q$ elements. Together they remove $L$ elements, exactly the
      length of the revealed cycle. If elements remain, the game continues
      with the next cycle; if no elements remain, $\Dplayer$ wins.
\end{itemize}

Thus the game is equivalent to reading the cycles of a uniformly random
permutation until a cycle of length at most $q$ is encountered.

\begin{theorem}
\label{thm:main-coupling}
For every $n$ and every positive integer $q$, the following description gives
the exact law of the $q$-game. Let $\pi$ be a uniformly random permutation of
$\{1,\ldots,n\}$, and read its cycles in canonical order.

The player $\Rplayer$ wins if and only if the first cycle of length at most
$q$ is a fixed point. The player $\Dplayer$ wins if and only if either the
first cycle of length at most $q$ has length in $\{2,\ldots,q\}$, or no cycle
of length at most $q$ exists.
\end{theorem}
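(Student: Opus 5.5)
We make the coupling of the previous section rigorous by induction on the number of turns, and then read off the winner.

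\emph{Construction of the coupled process.} We build the game and the permutation on a common probability space. Let $\pi$ be uniform on $\{1,\ldots,n\}$. Let $C_1,C_2,\ldots$ be its cycles in canonical order, with lengths $L_1,L_2,\ldots$. Set $m_1=n$ and $m_{j+1}=m_j-L_j$.

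At round $j$, as long as the game has not ended, we claim two things:
\begin{itemize}
\item[(a)] the pile at the start of $\Rplayer$'s $j$-th turn has exactly $m_j$ elements;
\item[(b)] conditional on $C_1,\ldots,C_{j-1}$, the permutation induced on the unrevealed set is uniform.
\end{itemize}
We prove both by induction on $j$, using Lemma~\ref{lem:uniform-cycle-length} at each step. At round $j$, define $K_j$ from $L_j$ through the bijection of the previous section, with $m=m_j$. By (b) and Lemma~\ref{lem:uniform-cycle-length}, $L_j$ is uniform on $\{1,\ldots,m_j\}$ given the past. Hence $K_j$ is uniform on $\{1,\ldots,m_j\}$ given the past, which is exactly the conditional law of $\Rplayer$'s move in the game.

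\emph{The three cases.}
\begin{itemize}
\item If $L_j=1$, then $K_j=m_j$ and $\Rplayer$ wins.
\item If $2\le L_j\le q$, then $L_j\le\min(q,m_j)$, so $K_j=m_j-L_j+1$. This leaves $L_j-1\le q$ elements, which $\Dplayer$ removes, and $\Dplayer$ wins.
\item If $L_j>q$, then $\Rplayer$ leaves $m_j-L_j+q\ge q$ elements, and $\Dplayer$ removes exactly $q$ of them. The pile then has $m_j-L_j=m_{j+1}$ elements, which propagates (a). If $m_{j+1}=0$, $\Dplayer$ has emptied the pile and wins. Otherwise the second half of Lemma~\ref{lem:uniform-cycle-length} gives (b) for round $j+1$.
\end{itemize}

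\emph{Equality of laws.} The sequence of moves $(K_1,\ldots)$ and $\Dplayer$'s deterministic responses therefore has the same joint law as the game. This holds because at each step the conditional law of the next move given the history is the same in both processes. Hence the winner in the coupled process has the law of the winner of the $q$-game.

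\emph{Reading off the winner.} The game runs through cycles of length $>q$ until either the first index $j$ with $L_j\le q$ is reached or the cycles are exhausted.
\begin{itemize}
\item If such a $j$ exists, $\Rplayer$ wins iff $L_j=1$, i.e.\ iff the first short cycle is a fixed point. If instead $L_j\in\{2,\ldots,q\}$, $\Dplayer$ wins.
\item If no cycle has length $\le q$, the pile is exhausted at a $\Dplayer$ move, and $\Dplayer$ wins.
\end{itemize}
These alternatives are exhaustive and disjoint, which gives both statements of Theorem~\ref{thm:main-coupling}.

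\emph{Main obstacle.} The main point of care is the measure-theoretic step: matching laws requires conditional uniformity given the whole revealed history, not just marginal uniformity of each $L_j$. I would handle this by conditioning on the set of revealed elements and the revealed cycles, then applying Lemma~\ref{lem:uniform-cycle-length} to the remaining set, whose smallest element starts the next canonical cycle. Everything else is checking the bijection cases, including the edge case $m_j\le q$, where $L_j>q$ is impossible.
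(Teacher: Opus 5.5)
Your proof is correct and takes essentially the paper's approach: the same bijection $L\mapsto K$ applied cycle by cycle, the same three-case analysis ($L=1$, $2\le L\le q$, $L>q$), and the same reading-off of the winner. Your induction on (a) and (b), with conditional uniformity given the full revealed history, spells out what the paper leaves implicit when it says the construction ``restarts on the remaining elements'' using the second half of Lemma~\ref{lem:uniform-cycle-length}.
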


\begin{proof}
The preceding construction gives a move-by-move coupling between the game and
the cycle-revealing process. Cycles of length greater than $q$ are consumed by
a full round of play, after which the same construction restarts on the
remaining elements. The first cycle of length at most $q$ ends the game:
length $1$ gives a win for $\Rplayer$, while lengths $2,\ldots,q$ give a win
for $\Dplayer$. If all cycles have length greater than $q$, the last cycle is
also consumed by a full round, and $\Dplayer$ empties the pile.
\end{proof}

\section{Examples}

Let $q=3$. Consider the permutation
\[
    (1\,5\,8\,2)(3\,7)(4)(6\,9\,10).
\]
The cycles are already written in canonical order. The first cycle has length
$4$, so one full round of the game consumes that cycle and the game continues.
The next cycle has length $2$, which is at most $q$ and is not a fixed point.
Therefore $\Dplayer$ wins. The fixed point $(4)$ is irrelevant, because the
game has already ended.

On the other hand, for
\[
    (1\,5\,8\,2)(3)(4\,7)(6\,9\,10),
\]
the first cycle of length at most $3$ is the fixed point $(3)$, so
$\Rplayer$ wins.

Finally, if
\[
    (1\,4\,7\,2)(3\,6\,9\,5)(8\,10\,11\,12)
\]
and $q=3$, then there are no cycles of length at most $3$. In the coupled
game each cycle is consumed by a full round, and $\Dplayer$ wins on the last
round.

\section{The derangement case}

When $q=1$, the only cycles of length at most $q$ are fixed points. By
Theorem~\ref{thm:main-coupling}, $\Rplayer$ wins if and only if the
permutation has at least one fixed point. Therefore $\Dplayer$ wins if and
only if the permutation has no fixed points.

\begin{corollary}
For the original game, where $\Dplayer$ removes one element on each turn,
\[
    D_n^{(1)}=\frac{d_n}{n!},
\]
where $d_n$ is the number of derangements of $\{1,\ldots,n\}$.
\end{corollary}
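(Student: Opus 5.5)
The corollary follows by specializing Theorem~\ref{thm:main-coupling} to $q=1$ and then counting permutations. First I will restate what the theorem says for $q=1$. The only cycles of length at most $1$ are fixed points, and the set $\{2,\ldots,q\}$ is empty. The theorem then says that $\Dplayer$ wins exactly when the uniformly random permutation $\pi$ of $\{1,\ldots,n\}$ has no cycle of length at most $1$, that is, no fixed point. Likewise, $\Rplayer$ wins exactly when some fixed point exists: in that case the first cycle of length at most $1$, in canonical order, is automatically a fixed point.

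Second, I will check that the coupling really reproduces the law of the game for $q=1$. Here $\min(q,m)=1$, so the middle case of the definition of $K$ is vacuous and the map reads $K=m$ for $L=1$ and $K=L-1$ for $2\le L\le m$. This is a bijection onto $\{1,\ldots,m\}$. By Lemma~\ref{lem:uniform-cycle-length}, $K$ is therefore uniform, and the residual permutation is again uniform, so each round of the coupling has the same law as a round of the game. I will also check the boundary case in which $\Rplayer$ leaves exactly one element. That corresponds to $L=2$, a cycle of length $2>q$, so it is consumed by a full round and $\Dplayer$ empties the pile. This agrees with the theorem's clause ``no cycle of length at most $q$ exists'' when that cycle is the last one.

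Third, I will translate the event into a probability. Since $\pi$ is uniform over the $n!$ permutations,
\[
D_n^{(1)}=\mathbb{P}(\pi \text{ has no fixed point})=\frac{d_n}{n!}.
\]
If an explicit form is wanted, inclusion--exclusion over the set of fixed points recovers $\sum_{k=0}^n(-1)^k/k!$, which matches the analytic formula quoted in the introduction.

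I expect no real obstacle. The only point needing care is that the degenerate parameter choice $q=1$ is covered correctly by the case split in the coupling, namely the empty range $2\le L\le 1$ and the boundary case $L=2$ just discussed. Once those are checked, the corollary is an immediate instance of the theorem.
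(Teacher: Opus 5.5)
Your proposal is correct and follows the paper's argument: for $q=1$ the only cycles of length at most $q$ are fixed points, so by Theorem~\ref{thm:main-coupling} $\Dplayer$ wins exactly on fixed-point-free permutations, and uniformity of $\pi$ gives $D_n^{(1)}=d_n/n!$. There is one small slip in your optional boundary check: with $q=1$, $\Rplayer$ leaving exactly one element means $K=m-1$, i.e.\ $L=m$, whereas $L=2$ gives $K=1$ and leaves $m-1$ elements, so the two coincide only when $m=2$; this does not affect the proof, since Theorem~\ref{thm:main-coupling} already covers $q=1$ and the check is not needed.
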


This gives a direct combinatorial explanation for the appearance of
derangement numbers: $\Dplayer$ wins exactly on fixed-point-free
permutations.

\section{An exact finite expression}

Let $C_j(\pi)$ denote the number of cycles of length $j$ in a permutation
$\pi$, and set
\[
    M_q(\pi)=\sum_{j=1}^q jC_j(\pi),
\]
the number of elements lying in cycles of length at most $q$.

Theorem~\ref{thm:main-coupling} says that $\Rplayer$ wins exactly when the
smallest element among these $M_q(\pi)$ elements lies in a fixed point. If
$M_q(\pi)=0$, then $\Rplayer$ does not win. Conditional on the numbers
$C_1,\ldots,C_q$, the labels on the elements lying in short cycles are
exchangeable. There are $C_1$ elements in fixed points among the $M_q$
short-cycle elements. Hence the exact finite probability is
\[
    R_n^{(q)}
    =
    \mathbb{E}\left[
        \frac{C_1}{M_q}\mathbf{1}_{\{M_q>0\}}
    \right],
    \qquad
    D_n^{(q)}
    =
    1-
    \mathbb{E}\left[
        \frac{C_1}{M_q}\mathbf{1}_{\{M_q>0\}}
    \right].
\]

This formula is often the cleanest finite form: it says that one first chooses
a uniformly random permutation, then looks only at the elements lying in cycles
of lengths $1,\ldots,q$. The random player wins exactly when the smallest of
these elements lies in a fixed point.

\section{Limiting probabilities and computation}

For fixed $q$, the short cycle counts of a uniformly random permutation have
a standard Poisson limit:
\[
    (C_1,\ldots,C_q)
    \xrightarrow[n\to\infty]{d}
    (Z_1,\ldots,Z_q),
\]
where the random variables $Z_1,\ldots,Z_q$ are independent and
$Z_j\sim \operatorname{Poisson}(1/j)$.

This limit has a short combinatorial proof. Here
$(x)_a=x(x-1)\cdots(x-a+1)$ denotes a falling factorial. For fixed
nonnegative integers $a_1,\ldots,a_q$, the factorial moment
\[
    \mathbb{E}\left[\prod_{j=1}^q (C_j)_{a_j}\right]
\]
counts ordered choices of $a_j$ cycles of length $j$, for each $j$. Choosing
these disjoint cycles and then permuting all remaining elements gives exactly
\[
    \prod_{j=1}^q \frac{1}{j^{a_j}},
\]
whenever $n\ge \sum_{j=1}^q ja_j$. These are the factorial moments of
independent Poisson variables with means $1,1/2,\ldots,1/q$.

Thus, with
\[
    W_q=Z_1+2Z_2+\cdots+qZ_q,
\]
we obtain the limiting form
\[
    \lim_{n\to\infty} R_n^{(q)}
    =
    \mathbb{E}\left[
        \frac{Z_1}{W_q}\mathbf{1}_{\{W_q>0\}}
    \right],
    \qquad
    \lim_{n\to\infty} D_n^{(q)}
    =
    1-
    \mathbb{E}\left[
        \frac{Z_1}{W_q}\mathbf{1}_{\{W_q>0\}}
    \right].
\]

Equivalently, the limiting event is described as follows. Among all elements
that lie in cycles of lengths $1,\ldots,q$, the smallest such element decides
the game. If it lies in a fixed point, $\Rplayer$ wins; if it lies in a cycle
of length $2,\ldots,q$, $\Dplayer$ wins. If no such element exists,
$\Dplayer$ wins.

This expectation can be converted into a one-dimensional formula. Let
\[
    H_q=\sum_{j=1}^{q}\frac{1}{j},
    \qquad
    P_q(x)=\sum_{j=1}^{q}\frac{x^j}{j}.
\]
Here $H_q$ is the $q$-th harmonic number. Since
\[
    \frac{1}{W_q}=\int_0^1 x^{W_q-1}\,dx
    \qquad (W_q>0),
\]
and the variables $Z_1,\ldots,Z_q$ are independent Poisson variables, we get
\[
\begin{aligned}
    \rho_q
    :=
    \lim_{n\to\infty} R_n^{(q)}
    &=
    \mathbb{E}\left[
        \frac{Z_1}{W_q}\mathbf{1}_{\{W_q>0\}}
    \right]                                                  \\
    &=
    \int_0^1 \mathbb{E}\left[Z_1x^{W_q-1}\right]\,dx          \\
    &=
    e^{-H_q}\int_0^1 e^{P_q(x)}\,dx .
\end{aligned}
\]
Consequently,
\[
    \lim_{n\to\infty} D_n^{(q)}=1-\rho_q.
\]

The values in Table~\ref{tab:limits} were computed numerically from this
one-dimensional integral.

\begin{table}[h]
\centering
\begin{tabular}{ccc}
\toprule
$q$ & $\lim_{n\to\infty}R_n^{(q)}$ & $\lim_{n\to\infty}D_n^{(q)}$ \\
\midrule
1 & 0.632120559 & 0.367879441 \\
2 & 0.478268141 & 0.521731859 \\
3 & 0.390581907 & 0.609418093 \\
4 & 0.332928681 & 0.667071319 \\
5 & 0.291712632 & 0.708287368 \\
10 & 0.186039907 & 0.813960093 \\
20 & 0.113235137 & 0.886764863 \\
50 & 0.055892559 & 0.944107441 \\
100 & 0.031913064 & 0.968086936 \\
\bottomrule
\end{tabular}
\caption{Selected limiting winning probabilities.}
\label{tab:limits}
\end{table}

The table suggests that, after taking $n\to\infty$, the random player's
winning probability decreases to zero as $q$ grows. This is indeed the case.

\begin{corollary}
\[
    \lim_{q\to\infty}\rho_q=0,
    \qquad
    \lim_{q\to\infty}\lim_{n\to\infty}D_n^{(q)}=1.
\]
\end{corollary}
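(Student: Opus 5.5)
We will work directly with the one-dimensional formula $\rho_q = e^{-H_q}\int_0^1 e^{P_q(x)}\,dx$ established above. Since $\lim_{n\to\infty}D_n^{(q)}=1-\rho_q$, the second claim follows at once from the first, so it suffices to show $\rho_q\to 0$.

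The key observation is that $P_q(x)$ is a partial sum of $-\log(1-x)=\sum_{j\ge1}x^j/j$. Moreover,
\[
    H_q-P_q(x)=\sum_{j=1}^q\frac{1-x^j}{j}\ge 0 \quad\text{for } x\in[0,1].
\]
Hence the integrand $e^{P_q(x)-H_q}$ lies in $[0,1]$. We can therefore use dominated convergence (dominating function $1$ on $[0,1]$), provided we show that the integrand tends to $0$ pointwise for each fixed $x\in[0,1)$.

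For fixed $x<1$, we have $P_q(x)\le -\log(1-x)$, which is a finite constant, while $H_q\to\infty$. Thus
\[
    e^{P_q(x)-H_q}\le \frac{e^{-H_q}}{1-x}\to 0 .
\]
The single point $x=1$ has measure zero. Dominated convergence then gives $\int_0^1 e^{P_q(x)-H_q}\,dx\to 0$, that is, $\rho_q\to0$.

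If a quantitative bound is preferred over the limit argument, split the integral at $1-\varepsilon$:
\[
    \rho_q\le \varepsilon + e^{-H_q}\int_0^{1-\varepsilon}\frac{dx}{1-x}=\varepsilon+e^{-H_q}\log(1/\varepsilon).
\]
Choosing $\varepsilon=1/q$ and using $H_q\ge\log q$ gives $\rho_q\le (1+\log q)/q$. This matches the roughly $1/q$ decay seen in Table~\ref{tab:limits}.

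There is no real obstacle. The only point needing care is the uniform domination near $x=1$, where $P_q(x)$ approaches $H_q$. This is exactly why we bound the integrand by $1$ there rather than by $1/(1-x)$, which is not integrable on $[0,1]$.
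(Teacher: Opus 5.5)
Your proof is correct and is essentially the paper's: the quantitative version (split at $1-1/q$, bound $e^{P_q(x)}$ by $1/(1-x)$ on the left piece and by $e^{H_q}$ on the right) is the paper's argument, and your dominated-convergence version is the soft form of the same two estimates $P_q(x)\le -\log(1-x)$ and $P_q(x)\le H_q$. One small correction to your closing remark: the table shows decay of order $\log q/q$, not $1/q$ (for instance $q\rho_q$ grows from about $1.86$ at $q=10$ to $3.19$ at $q=100$), so your bound $(1+\log q)/q$ is actually of the correct order.
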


\begin{proof}
For $0\le x<1$,
\[
    P_q(x)\le \sum_{j=1}^{\infty}\frac{x^j}{j}=-\log(1-x).
\]
For $q\ge2$, split the integral defining $\rho_q$ at $1-1/q$. On the first
part,
\[
    e^{P_q(x)}\le \frac{1}{1-x},
\]
and on the second part, $P_q(x)\le H_q$. Therefore
\[
\begin{aligned}
    \rho_q
    &\le
    e^{-H_q}\int_0^{1-1/q}\frac{dx}{1-x}
    +
    e^{-H_q}\int_{1-1/q}^{1}e^{H_q}\,dx \\
    &=
    e^{-H_q}\log q+\frac{1}{q}.
\end{aligned}
\]
Since $H_q\ge \log(q+1)$, we have
\[
    \rho_q\le \frac{\log q}{q+1}+\frac{1}{q}\xrightarrow[q\to\infty]{}0.
\]
\end{proof}

\section{Conclusion}

The derangement identity proved analytically in~\cite{fridman2024random} is
the first case of a more general cycle phenomenon. When $\Dplayer$ removes
$q$ elements per turn, the game is governed by the first cycle of length at
most $q$ in the canonical cycle order of a uniformly random permutation. The
case $q=1$ gives derangements because the only relevant short cycles are fixed
points. For larger $q$, the same coupling leads to limiting probabilities
determined by the independent Poisson limits of short cycle counts, and these
limits can be computed from the explicit formula
\[
    \rho_q=e^{-H_q}\int_0^1 e^{P_q(x)}\,dx,
    \qquad
    \lim_{n\to\infty}D_n^{(q)}=1-\rho_q .
\]
Thus the original game is not an isolated derangement coincidence, but the
$q=1$ member of a natural family controlled by short cycles in random
permutations.

\section*{Remark on the AI-discovered argument}

A notable aspect of this note is its discovery process. Over the previous two
years, the author periodically asked contemporary versions of ChatGPT for a
combinatorial explanation of the derangement identity in~\cite{fridman2024random}.
Those attempts produced answers that were either incorrect or unnecessarily
complicated. In contrast, when the question was asked again using ChatGPT 5.5,
the cycle-coupling argument in this note was found quickly and in essentially
the form presented above. This is recorded because the argument is short,
correct, and nontrivial, and because it illustrates the increasing ability of
AI tools to identify meaningful combinatorial structure.

\end{document}